\newtheorem{claim}{Claim}
\newtheorem{theorem}{Theorem}
\newtheorem{lemma}{Lemma}
\newtheorem{proposition}{Proposition}
\newtheorem{conjecture}{Conjecture}
\newtheorem{observation}{Observation}
\newcommand{\chiC}[1]{\chi_{\mathrm{fum}}(#1)}
\newcommand{\chiCe}[1]{\chi_{\mathrm{fum}}'(#1)}
\newenvironment{proofclaim}[1][]%
    {\noindent \emph{Proof.} {}{#1}{}}{$~$\hfill $~\blacklozenge$ \vspace{0.2cm}}
\title{On facial unique-maximum (edge-)coloring}
\author
{
	Vesna Andova\thanks{Faculty of Electrical Engineering and Information Technologies,
Ss Cyril and Methodius Univ., Skopje, Macedonia \& Faculty of Information Studies, Novo mesto, Slovenia.
		E-Mail: \texttt{vesna.andova@gmail.com}}, \
	Bernard Lidick\'y\thanks{Department of Mathematics, Iowa State University, USA.
		E-Mail: \texttt{lidicky@iastate.edu}}, \		
	Borut Lu\v{z}ar\thanks{Faculty of Information Studies, Novo mesto, Slovenia.
		E-Mail: \texttt{borut.luzar@gmail.com}}, \
  	Riste \v{S}krekovski\thanks{Faculty of Information Studies, Novo mesto
  		\& University of Ljubljana, Faculty of Mathematics and Physics
  		\& University of Primorska, FAMNIT, Koper, Slovenia.
   		E-Mail: \texttt{skrekovski@gmail.com}}		
}
\begin{document}
\maketitle

{
	\abstract
	{
		A \textit{facial unique-maximum coloring} of a plane graph is a vertex coloring where
		on each face $\alpha$ the maximal color appears exactly once on the vertices of $\alpha$.
		If the coloring is required to be proper, then the upper bound for the minimal number of colors required for such a coloring is set to $5$.
		Fabrici and G\"{o}ring~\cite{FabGor16} even conjectured that $4$ colors always suffice.
		Confirming the conjecture would hence give a considerable strengthening of the Four Color Theorem.
		In this paper, we prove that the conjecture holds for subcubic plane graphs, outerplane graphs and
		plane quadrangulations.
		Additionally, we consider the facial edge-coloring analogue of the aforementioned coloring and
		prove that every $2$-connected plane graph admits such a coloring with at most $4$ colors.
	}

	\bigskip
	{\noindent\small \textbf{Keywords:} facial unique-maximum coloring, facial unique-maximum edge-coloring, plane graph.}
}

\section{Introduction}

In this paper, we consider simple graphs only.
We call a graph \emph{planar} if it can be embedded in the plane without crossing edges
and we call it \emph{plane} if it is already embedded in this way.
A \emph{coloring} of a graph is an assignment of colors to vertices.
If in a coloring adjacent vertices receive distinct colors, it is \emph{proper}.
The cornerstone of graph colorings is the Four Color Theorem stating that every planar graph
can be properly colored using at most $4$ colors~\cite{AppHak76}.
Fabrici and G\"{o}ring~\cite{FabGor16} proposed the following strengthening of the Four Color Theorem.

\begin{conjecture}[Fabrici and G\"{o}ring \cite{FabGor16}]
	\label{conj:plane4}
	If $G$ is a plane graph, then there is a proper coloring of the vertices of $G$
	by colors in $\{1,2,3,4\}$ such that every face contains a unique vertex colored with
	the maximal color appearing on that face.
\end{conjecture}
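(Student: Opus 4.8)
The plan is to attack the full conjecture through a minimal-counterexample argument driven by discharging, using the Four Color Theorem~\cite{AppHak76} itself as a black box. The first point to stress is that any facial unique-maximum proper $4$-coloring is in particular an ordinary proper $4$-coloring, so a proof of Conjecture~\ref{conj:plane4} necessarily contains a proof of the Four Color Theorem; hence the realistic strategy is not to avoid $4$CT but to start from it and adjoin the machinery needed to enforce the facial condition. I would take a counterexample $G$ minimizing first the number of vertices and then the number of edges, and aim to show that $G$ cannot contain any member of a suitable unavoidable set of configurations, so that no counterexample exists.

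Before the discharging, I would carry out the standard reductions that control the local structure of $G$. One shows $G$ may be assumed $2$-connected, indeed $3$-connected, so that every face is bounded by a cycle and ``the vertices of a face'' is unambiguous; a cut vertex or a $2$-cut lets one color the pieces separately and glue them after permuting colors so that the shared vertices agree, the permutation freedom coming from the fact that colors matter only through their relative order on each face. Next one eliminates small-degree vertices: a vertex $v$ of degree at most $2$ lies on few faces, and after coloring $G-v$ by minimality there remain enough of the four colors to give $v$ a color strictly below the current maximum on each incident face, so that neither properness nor uniqueness is disturbed. These steps force a minimum degree and rule out the easiest obstructions, exactly as in the special cases treated in this paper.

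The core is the discharging phase. I would assign to each vertex $v$ the charge $\deg(v)-6$ and to each face $f$ the charge $2\,\ell(f)-6$, where $\ell(f)$ is its length; by Euler's formula the total charge is $-12$. One then designs rules redistributing charge from long faces and high-degree vertices toward the low-degree vertices and triangles, and argues that a graph avoiding every reducible configuration ends with nonnegative charge everywhere, a contradiction. For each configuration in the unavoidable set one must establish reducibility: given a facial unique-maximum $4$-coloring of the reduced graph, extend it across the configuration while preserving both properness and, on every incident face, the uniqueness of the maximum.

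The main obstacle is precisely this reducibility step, and it is why the conjecture stays open even though $4$CT is available. The facial unique-maximum condition is \emph{non-local}: whether a face is satisfied depends on all of its incident vertices, and faces may be arbitrarily long, so the Kempe-chain recolorings underlying the reducibility proofs for $4$CT do not transfer. Recoloring a single vertex to repair the maximum on one face can simultaneously destroy uniqueness on an adjacent face sharing that vertex, and such repairs can propagate around long faces with no obvious decreasing potential. Taming this interaction, i.e. finding an extension scheme that controls the maxima of all incident faces at once, is the genuinely new ingredient a full proof would require. This also explains why the results actually established here are confined to subcubic graphs, outerplane graphs, and quadrangulations: in each the face or degree structure is rigid enough, through short faces, a single unbounded region, or a bipartite $4$-face structure, to make the non-local constraint locally controllable, whereas for general plane graphs no such control is presently known.
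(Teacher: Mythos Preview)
The statement you address is Conjecture~\ref{conj:plane4}, which the paper does \emph{not} prove; it is treated as open, and only the special cases of Theorem~\ref{thm:subcubic} and the quadrangulation proposition are established. There is thus no proof in the paper to compare against, and your proposal is, by your own admission, a strategy that stalls at the reducibility step rather than a proof.

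Even before that acknowledged obstacle, one of your preliminary reductions is already flawed. You propose to handle a cut vertex or $2$-cut by coloring the pieces separately and then permuting colors on one side so that the shared vertices agree, invoking ``permutation freedom coming from the fact that colors matter only through their relative order on each face.'' But the only order-preserving bijection of $\{1,2,3,4\}$ to itself is the identity, so there is essentially no such freedom: you cannot change the color of a shared vertex on one piece without risking the unique-maximum condition on faces interior to that piece. This lack of symmetry is precisely why the paper's partial results rest on a tailored precoloring-extension statement (Lemma~\ref{lem:sub}) rather than on a naive block decomposition. Your degree-$\le 2$ reduction is similarly optimistic: deleting $v$ changes the face structure of the embedding, and even granting a FUM-coloring of $G-v$, the simultaneous constraints ``avoid both neighbors'' and ``stay strictly below the current maximum on each incident face of $G$'' need not be satisfiable from $\{1,2,3,4\}$.

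For context, Conjecture~\ref{conj:plane4} was subsequently \emph{disproved}: Lidick\'y, Messerschmidt, and \v{S}krekovski constructed a plane graph with FUM chromatic number $5$. So the obstruction you locate in the reducibility step is not a mere technicality to be overcome with more effort; any discharging-and-reducibility scheme aimed at the full conjecture must fail.
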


A proper coloring of a graph embedded on some surface, where colors are integers and every face has a unique vertex colored with a maximal color,
is called a \emph{facial unique-maximum coloring} or \emph{FUM-coloring} for short (Wendland uses the notion \emph{capital coloring} instead).
This type of coloring was first studied by Fabrici and G\"{o}ring~\cite{FabGor16}.
The main motivation for their research comes from the \emph{unique-maximum coloring} (also known as \emph{ordered coloring}), defined as a coloring
where there is only one vertex colored with the maximal color on every path in a graph.
Studying unique-maximum coloring was motivated due to a number of applications it finds in various branches of
mathematics and computer science; see, e.g.,~\cite{CheKesPal13,CheTot11,KatMccSea95} for more details.
Fabrici and G\"{o}ring used this concept in a facial version, which is of great interest, among others,
also due to Conjecture~\ref{conj:plane4} and its direct connection to the Four Color Theorem.
Coloring embedded graphs with respect to faces is a bursting field itself; the main directions are presented in a recent survey by Czap and Jendrol'~\cite{CzaJen16}.

For a graph $G$, the minimum number $k$ such that $G$ admits a FUM-coloring
with colors $\{1,2,\ldots,k\}$ is called the \emph{FUM chromatic number of $G$}, denoted by $\chiC{G}$.
Fabrici and G\"{o}ring~\cite{FabGor16} proved that if $G$ is a plane graph, then  $\chiC{G} \leq 6$.
Their result was further improved as follows.
\begin{theorem}[Wendland~\cite{Wen16}]
	\label{thm:plane5}
	If $G$ is a plane graph, then $\chiC{G} \leq 5$.
\end{theorem}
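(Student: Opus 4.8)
\medskip
\noindent\textbf{Proof proposal.}\quad
The plan is to start from a proper $4$-coloring given by the Four Color Theorem and repair it into a FUM-coloring using a single extra color; I should note that it is conceivable that $5$ colors can be reached by an elementary discharging argument in the spirit of the classical Five Color Theorem, without invoking the Four Color Theorem at all, but the repair approach is the cleanest to describe. First I would reduce, in a routine way, to $2$-connected plane graphs: disconnectedness is removed by adding a bridge between two components, which leaves the face set unchanged (the new edge is traversed twice along a single face) and only adds properness constraints, and cut vertices are handled by a blockwise induction reconciling the colors around the cut vertex. So assume $G$ is $2$-connected; then every face is bounded by a cycle.

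Fix a proper coloring $c\colon V(G)\to\{1,2,3,4\}$ and call a face \emph{bad} if its largest color occurs on it at least twice. The key elementary point is that a triangular face is never bad, since its three vertices are pairwise adjacent and therefore receive three distinct colors; hence only faces of length at least $4$ can be bad, and if there are none, $c$ is already a FUM-coloring. To repair a bad face, recolor one of its vertices $v$ to the new color $5$: then $5$ becomes the unique maximum on every face through $v$, so every bad face through $v$ is repaired at once, while a face through $v$ that was already fine stays fine as long as it acquires at most one new $5$. Thus it suffices to find an independent set $R\subseteq V(G)$ that meets every face in at most one vertex and every bad face in exactly one vertex; recoloring $R$ to $5$ and keeping $c$ elsewhere then yields a FUM $5$-coloring. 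If a single $R$ cannot meet these requirements, there is still room to first permute the colors $1,2,3,4$ on a bad face, or to choose $c$ so as to make as few faces bad as possible, leaving a residual system that some valid $R$ can cover.

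The main obstacle is precisely the existence of such an $R$, that is, the global consistency of the repairs. Two bad faces overlapping in several vertices, or a long bad face enclosing many small bad faces, can force two members of $R$ onto a common face; indeed the $1\times k$ strip of quadrilaterals shows that one cannot, in general, route \emph{all} faces of length $\ge 4$ through color $5$. Overcoming this needs a genuine global argument. I would try an induction on the number of bad faces in which a well-chosen bad face is repaired and the statement is strengthened (for instance allowing some faces to carry a prescribed color pattern), because the naive vertex-deletion induction fails: deleting a vertex of degree $2$ can merge two bad faces into a configuration that no single choice of color for that vertex repairs, so nearby vertices must be allowed to be recolored too. Equivalently, one could run a reducibility/discharging analysis on a minimal counterexample, classifying how bad faces of a $2$-connected plane graph can interlock and showing that, after local recolorings with colors $\le 4$, the remaining bad faces form a system coverable by an independent set $R$ that meets every face at most once. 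This combinatorial core carries the weight of the argument; the reductions and the handling of the fifth color above are routine.
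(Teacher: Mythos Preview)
This theorem is not proved in the present paper; it is quoted from Wendland~\cite{Wen16}. What the paper does report about the method is quite different from your outline: Wendland strengthens Theorem~\ref{thm:aux} (the Fabrici--G\"oring black/blue/red auxiliary $3$-coloring) so that, in addition to conditions~(1) and~(2), every triangle---facial or separating---contains at least one non-black vertex. The black vertices then induce a triangle-free planar graph and are handled by Gr\"otzsch's theorem with colors $\{1,2,3\}$, while the colors $4$ and $5$ are reserved for blue and red. The essential feature is that the special sets are produced \emph{before} any proper coloring is chosen, by a direct induction on plane graphs; the $5$-coloring is then read off from the decomposition, not the other way around.

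Your proposal has a genuine gap. You correctly isolate the right kind of object---a set $R$ meeting every face at most once, to be recolored to $5$---but you never establish that it exists; you explicitly defer this ``combinatorial core'' to an unspecified induction or discharging scheme, so what you have written is a framework, not a proof. Moreover, by committing to a proper $4$-coloring $c$ first and only then seeking an $R$ tailored to the bad faces of that particular $c$, you make the task strictly harder than necessary: there is no reason such an $R$ should exist for an arbitrary $c$, and your own $1\times k$ strip already illustrates how the face-hitting constraints can become globally inconsistent. The hedges you list (permute colors locally, pick $c$ minimizing bad faces, then discharge) are not arguments and it is far from clear they can be made to converge. The missing idea is precisely Theorem~\ref{thm:aux} and Wendland's refinement of it: one proves a purely structural statement about plane graphs---a red set meeting each face at most once, a blue backup catching the remaining faces, and a triangle-free black remainder---independent of any prior coloring, and colors afterwards. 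Decoupling the structural lemma from the coloring step is what carries the proof.
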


We show that the upper bound $4$ from Conjecture~\ref{conj:plane4} holds for several subclasses of plane graphs, and that,
surprisingly, the bound is tight in most of the cases. The main result of the paper regarding the FUM-coloring of vertices is the following.
\begin{theorem}
	\label{thm:subcubic}
	If $G$ is a plane subcubic graph or an outerplane graph, then $\chiC{G} \leq 4$.
\end{theorem}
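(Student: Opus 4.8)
The plan is to prove the two parts separately by induction --- on $|V(G)|$ for outerplane graphs and via a minimal counterexample for subcubic plane graphs --- in each case peeling off a small reducible configuration, applying the statement to the smaller graph, and then re-inserting the configuration while repairing the unique-maximum property on the few affected faces. I first dispose of low connectivity. If $\deg(v)\le 1$, take a FUM-coloring of $G-v$ and color $v$ with a color distinct from its (at most one) neighbor and from the current maximum of the unique face that reabsorbed $v$'s position: at most two of the four colors are forbidden. A cut vertex or a disconnection is treated block by block (respectively component by component): color the pieces one at a time, and where two pieces meet, shift the colors of one piece upward (or put a large color there) so that the face of $G$ running through that meeting point gets a unique maximum, carried by that one piece. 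Hence we may assume $G$ is $2$-connected, so that every face is bounded by a cycle.

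For a $2$-connected outerplane graph $G$: if $G$ is a cycle, color one vertex $3$ and properly $2$-color the remaining path. Otherwise the weak dual of $G$ is a tree with at least two leaves, and a leaf is an \emph{ear}: a face bounded by a single chord $xy$ together with a path $x=p_0,p_1,\dots,p_k=y$ with $k\ge 2$ along the outer cycle whose internal vertices all have degree $2$. Delete $p_1,\dots,p_{k-1}$ (keeping the chord $xy$), FUM-color the smaller, still $2$-connected outerplane graph, and recolor the path. The crux is a local claim: for any colors $c(x)\ne c(y)$ from $\{1,2,3,4\}$ and any $m\in\{1,2,3,4\}$ (the maximum on the enlarged outer face before re-insertion), one can properly color $p_1,\dots,p_{k-1}$ from $\{1,2,3,4\}$ so that both the re-created ear face and the outer face get a unique maximum --- using only small colors on the path when $m$ or $\max(c(x),c(y))$ leaves room, and otherwise devoting a single $p_i$ to color $3$ or $4$ (which then realizes the unique maximum of both faces) while $2$-coloring the rest. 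The resulting case analysis, driven by $c(x)$, $c(y)$, $m$ and the parity of $k$, is elementary, but it is precisely where the fourth color is consumed.

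For a subcubic plane graph $G$: take a counterexample minimizing $|V(G)|+|E(G)|$. By the reductions above it is $2$-connected, so every vertex has degree $2$ or $3$, and $G\notin\{K_2,K_3,K_4\}$ --- in particular any proper $4$-coloring of $K_4$ is a FUM-coloring, since all four of its faces are triangles. If $G$ has a degree-$2$ vertex, use it, or a maximal chain of degree-$2$ vertices through it, as the reducible configuration; otherwise $G$ is cubic, and Euler's formula together with $\Delta(G)\le 3$ gives $\sum_{f}(6-|f|)=12>0$, so $G$ has a face of length at most $5$, which is used instead. Deleting a degree-$2$ vertex merges its two incident faces; contracting a short face of a cubic graph merges the faces around it into the rosette of the new vertex. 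In either case one FUM-colors the smaller graph and re-inserts, recoloring only the re-inserted vertices and possibly one nearby vertex.

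The main obstacle --- and I expect essentially all the difficulty of the subcubic part --- is this re-insertion. Splitting a merged face back into two or more faces forces each piece to regain a unique maximum, but the color of a re-inserted vertex is constrained both by properness (up to two neighbors) and by the maxima of every face it borders, potentially four constraints against only four colors, so the naive ``pick an unused color'' argument fails. The remedy must be either to choose the reducible configuration so that at most three of these constraints are simultaneously active, or to recolor a bounded patch around it --- for instance a second degree-$2$ vertex on the same chain, or the vertex that used to realize the maximum of a merged face --- so as to free a color. Showing that, among the unavoidable configurations above, one is always reducible in this strong sense is the heart of the argument.
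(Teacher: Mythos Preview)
Your subcubic argument is not a proof: you yourself say that ``showing that \dots\ one is always reducible in this strong sense is the heart of the argument'' and then stop. Moreover, the unavoidable set you propose does not stay inside the induction hypothesis: contracting a face of length $4$ or $5$ in a cubic graph produces a vertex of degree $4$ or $5$, so the smaller graph is no longer subcubic and you cannot recurse. Even for a triangular face, contraction merges three faces into one, and splitting it back into three faces with unique maxima, while keeping properness at three re-inserted vertices, is exactly the obstacle you have not overcome.

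Your low-connectivity reduction is also unjustified. ``Shift the colors of one piece upward'' has no meaning for colors in $\{1,2,3,4\}$, and the obvious interpretation fails: if two blocks $G_1,G_2$ meeting at a cut vertex $w$ each genuinely require all four colors in a FUM-coloring, then each contributes a vertex of color $4$ to the merged outer face, and no permutation of the colors of $G_2$ fixing $c(w)$ can avoid this. Your ear-decomposition for the $2$-connected outerplane case is fine (the case analysis on $c(x),c(y),m$ does go through), but it does not by itself handle cut vertices or components.

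The paper resolves both problems with one Thomassen-style strengthening: it proves inductively that every subcubic or outerplane plane graph has a proper coloring in which all \emph{outer-face} vertices lie in $\{1,2,3\}$, every \emph{inner} face has a unique maximum, and a prescribed path of at most two outer vertices may be precolored from $\{1,2,3\}$. Restricting the outer face to $\{1,2,3\}$ is precisely the missing idea that kills your re-insertion obstacle: remove an outer vertex $v\notin P$ together with an interior neighbour $u$; in $G-\{u,v\}$ every face that was incident with $u$ is now part of the outer face, hence colored in $\{1,2,3\}$, so setting $c(u)=4$ makes $u$ the unique maximum of all of them at once, while $v$ has at most two outer neighbours to avoid in $\{1,2,3\}$. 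The precolored $2$-path is what makes cut vertices and chords split cleanly. The theorem follows by deleting one outer vertex, applying the lemma, and coloring that vertex $4$.
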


In the second part of the paper, we consider the edge-coloring version of the problem,
which has been introduced by Fabrici, Jendrol', and Vrbjarov\'{a}~\cite{FabJenVrb15}.
For a graph $G$ embedded on some surface, two distinct edges are said to be \emph{facially adjacent}
if they are consecutive in some facial path, i.e., they have a common vertex and they are incident with a same face.
A \emph{facial edge-coloring} is a coloring of edges such that facially adjacent edges receive distinct colors.
It is rather straightforward to prove that every plane graph admits a facial edge-coloring with at most $4$ colors.

For a graph $G$, we denote by $\chiCe{G}$ the minimum number $k$ such that
there exists a facial edge-coloring using colors $1,\ldots,k$
such that each face is incident with a unique edge colored with the maximal color.
Such a coloring is called a \emph{FUM-edge-coloring}.
In~\cite{FabJenVrb15}, Fabrici et al. proposed the following conjecture.

\begin{conjecture}[Fabrici et al.~\cite{FabJenVrb15}]
	\label{conj:main}
	If $G$ is a $2$-edge-connected plane graph, then $\chiCe{G} \leq 4$.
\end{conjecture}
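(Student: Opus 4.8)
The plan is to establish the bound for $2$-connected plane graphs, which is what the abstract promises and which already carries all the essential difficulty; a bridgeless plane graph with a cut vertex decomposes into smaller bridgeless plane graphs sharing that vertex, and after a suitable permutation of colors their colorings glue together, so this case reduces to the $2$-connected one. For the $2$-connected case I would argue by contradiction: let $G$ be a counterexample minimizing $|E(G)|$, so that every $2$-connected plane graph with fewer edges has a facial edge-coloring with colors in $\{1,2,3,4\}$ in which every face carries a unique edge of maximum color. The goal is to find a bounded reducible configuration in $G$, color a smaller $2$-connected plane graph obtained from it, and extend the coloring back to $G$.

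First I would dispose of the degenerate cases. If $G$ is a cycle then both of its faces are bounded by all of $E(G)$, and it is routine to color the edges with $\{1,2,3\}$ so that consecutive edges differ and a single edge receives the maximum used color; so $G$ is not a cycle. Next, a maximal path $v_1v_2\cdots v_k$ of degree-two vertices with $k\ge 2$ is contracted to a shorter chain, the smaller graph is colored by minimality, and the chain is reinserted: all of its edges lie on the same two faces and only consecutive ones are facially adjacent, so it behaves like a path that can be colored from $\{1,2\}$ with, if necessary, one edge promoted to repair the unique-maximum condition on each of the two incident faces. Hence $G$ has no two adjacent degree-two vertices, so every face has length at least $3$, and an Euler-formula count then produces a bounded ``light'' configuration --- a short face, a short face sharing an edge with another short face, or a low-degree vertex on a short face --- from which one can pass to a smaller $2$-connected plane graph $G'$, in the typical case by deleting a single edge $e$ (in the few configurations where $G-e$ fails to be $2$-connected, contracting $e$ or deleting two edges is used instead).

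Suppose, to fix ideas, that $G'=G-e$, so the two faces $h_1,h_2$ of $G$ incident with $e$ have merged into one face $h$ of $G'$. By minimality $G'$ has a good coloring $c$; this $c$ is already correct on every face of $G$ other than $h_1$ and $h_2$, and the only value to be supplied is $c(e)$. For properness $c(e)$ must avoid the colors of the (at most four) edges of $G'$ facially adjacent to $e$, two at each endpoint of $e$; and to preserve the unique-maximum property on $h_1$ and on $h_2$ it must, in addition, avoid a short list of ``dangerous'' values read off from the position of the old unique maximum of $h$ --- one checks that $c(e)$ is safe if it is smaller than the unique maxima of both $h_1$ and $h_2$, and also safe if it equals $4$ and thereby becomes the new unique maximum on whichever side needs one. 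Whenever this list of forbidden values does not exhaust $\{1,2,3,4\}$, we are done.

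The heart of the matter, and the step I expect to be the main obstacle, is the case where $e$ is ``boxed in'': its four facial neighbors carry all four colors, so no value is legal for $c(e)$. Here one has to recolor. The strategy is to show that the reducible configuration can always be chosen so that one of those four neighbors, say $a$, can be recolored --- if necessary after a Kempe-type swap confined to $h$ and to the single other face containing $a$ --- without destroying properness or the unique-maximum property of any face, after which a color is freed for $e$. Identifying a configuration for which such an $a$ provably exists, and controlling the side effect of recoloring $a$ on its second face, is precisely where the real case analysis resides; the rest is bookkeeping.
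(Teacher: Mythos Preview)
The statement you are attempting is a \emph{conjecture} in the paper; the paper does not prove it. What the paper actually establishes is Theorem~\ref{thm:maine}, the $2$-vertex-connected case, and it leaves the $2$-edge-connected case open. Your first paragraph tries to close that gap by asserting that a bridgeless plane graph with a cut vertex $v$ decomposes into bridgeless blocks whose colorings ``glue together'' after a permutation of colors. This is not justified and is in fact the whole difficulty: at the cut vertex $v$ the blocks interleave in the rotation, so edges from different blocks become facially adjacent in $G$, and a face of $G$ incident with $v$ may contain edges from several blocks, so the unique-maximum condition on that face couples the colorings of all those blocks. A global permutation of colors in one block cannot, in general, repair both constraints at once. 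If this reduction were as easy as you suggest, Conjecture~\ref{conj:main} would already be settled.

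For the $2$-connected case your approach also has a structural gap that the paper's method is precisely designed to avoid. You induct on $2$-connected plane graphs and delete an edge $e$; but $G-e$ need not be $2$-connected, and your parenthetical fixes (``contracting $e$ or deleting two edges'') do not preserve the problem: contracting an edge changes facial adjacencies in an uncontrolled way, and there is no guarantee that some deletion keeps $2$-connectivity. The paper sidesteps this entirely by proving a stronger statement (Lemma~\ref{lem:maine}) about \emph{arbitrary} plane graphs equipped with a set $\mathcal{F}$ of ``free pairs'' and a mild condition on leaf-blocks; because the hypothesis no longer requires $2$-connectivity, removing edges is harmless, and the free pairs record exactly those new facial adjacencies that should be ignored. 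The proof of Lemma~\ref{lem:maine} then needs no discharging, no Kempe chains, and no recoloring: it identifies a single local configuration (B), shows that a leaf-block without it must be a cycle bounding a face, and colors that cycle directly. Finally, your ``boxed in'' case, which you yourself flag as the heart of the matter, is not actually carried out; so even as a proof of the $2$-connected case the proposal is a plan rather than an argument.
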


In~\cite{FabJenVrb15}, the authors proved that $\chiCe{G} \leq 6$ for every $2$-edge-connected plane graph $G$.
Our main result is that we prove $\chiCe{G} \leq 4$ if the assumption that the graph is $2$-edge-connected
is replaced by $2$-vertex-connectivity, supporting Conjecture~\ref{conj:main}.

\begin{theorem}
	\label{thm:maine}
	If $G$ is a $2$-vertex-connected plane graph, then $\chiCe{G} \leq 4$.
\end{theorem}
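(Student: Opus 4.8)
The plan is to prove Theorem~\ref{thm:maine} (that $\chiCe{G}\le 4$ for every $2$-vertex-connected plane graph $G$) by induction on $|V(G)|+|E(G)|$, combining reductions that shrink $G$ while keeping it $2$-connected with a direct construction, based on the dual graph $G^{\ast}$, in the remaining ``core'' case.

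\textbf{Reductions.} First I would dispose of low-degree and low-connectivity configurations. If $G$ has a vertex $v$ of degree $2$ whose neighbors $u,w$ are non-adjacent, then suppressing $v$ yields a smaller $2$-connected plane graph $G'=(G-v)+uw$; a FUM-edge-coloring of $G'$ with four colors should lift by replacing the edge $uw$ with the path $u\,v\,w$, keeping the color of $uw$ on one of the two new edges so that it remains the maximal edge on whichever of its two faces it was maximal, and choosing the color of the other new edge among those not forbidden by facial adjacency (and, where needed, below the retained color) --- a count that always leaves at least one option. If instead $u$ and $w$ are adjacent, so that $uvw$ bounds a triangular face, a short separate argument is needed (delete $v$, or contract an edge of the triangle, check that $2$-connectivity survives, and extend). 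Next, if $G$ is $2$-connected but not $3$-connected, pick a $2$-separation $\{u,w\}$ with $G=G_1\cup G_2$ and $V(G_1)\cap V(G_2)=\{u,w\}$, apply induction to the smaller graphs $G_i+uw$, and glue the two colorings along $uw$ --- recoloring one copy, if necessary, so the colors of $uw$ agree --- rechecking the unique-maximum condition on the two faces of $G$ obtained by merging faces of $G_1$ and $G_2$ across the separating pair. After these reductions we may assume $G$ is $3$-connected; then every face of $G$ is a cycle of length at least $3$ and $G^{\ast}$ is a $3$-connected, loopless plane multigraph.

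\textbf{Core construction via the dual.} In $G^{\ast}$ choose a maximal matching $M$, regarded as a set of edges of $G$. No two edges of $M$ lie on a common face, so $M$ is facially independent; color every edge of $M$ with color $4$. Maximality of $M$ gives three useful facts: the faces of $G$ carrying no $M$-edge --- call them \emph{free} faces --- are pairwise non-adjacent; every non-free face carries exactly one $M$-edge; and every free face carries none. Hence, as soon as every edge outside $M$ receives a color in $\{1,2,3\}$, each non-free face already has a unique maximal edge (its color-$4$ edge). For each free face $\alpha$ choose one incident edge $e_{\alpha}$ and designate it as the maximal edge of $\alpha$: then $e_{\alpha}$ must receive color $3$ and every other edge of $\alpha$ a color in $\{1,2\}$.

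\textbf{Coloring the remaining edges, and the main obstacle.} It remains to color $E(G)\setminus M$ with $\{1,2,3\}$ so that facially adjacent edges differ and, on every free face $\alpha$, the designated edge $e_{\alpha}$ is the unique maximal one. The latter forces the boundary cycle of $\alpha$, minus $e_{\alpha}$, to be a properly $\{1,2\}$-colored path --- locally always possible --- and hence every non-$M$ edge incident with a free face is confined to $\{1,2\}$, while an edge both of whose faces are non-free may use all of $\{1,2,3\}$. I expect the crux of the whole proof to be making these local requirements globally consistent: an edge lying on a free face $\alpha$ and a non-free face $\gamma$ is colored by $\alpha$'s alternating pattern yet must still be facially proper on $\gamma$, and two edges belonging to two distinct free faces but consecutive along a common non-free face must get different colors, which independently chosen $\{1,2\}$-patterns need not provide. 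Resolving this seems to require either a more deliberate choice of $M$ together with the edges $e_{\alpha}$ --- so that the free faces and their patterns are coordinated in advance --- or a propagation/exchange argument that simultaneously adjusts the $\{1,2\}$-patterns on free faces and the path-colorings on non-free faces, degenerating in the worst case to a bounded case analysis of the local structure around a non-free face. Every other step above is routine by comparison.
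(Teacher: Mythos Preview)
Your proposal leaves the central difficulty unresolved, and you say as much. After coloring the dual matching $M$ with $4$ and designating one edge $e_\alpha$ per free face, you need a facially proper $\{1,2,3\}$-coloring of $E(G)\setminus M$ in which every non-designated edge of a free face lies in $\{1,2\}$ and every $e_\alpha$ gets $3$. The obstruction you name is real: two edges belonging to different free faces but consecutive on a common non-free face are each forced into $\{1,2\}$ by independent alternating patterns, and you offer no mechanism for making those patterns compatible. ``Choose $M$ and the $e_\alpha$ more cleverly'' and ``run an exchange argument'' are not arguments; this consistency step \emph{is} the theorem, and nothing in the proposal reduces its difficulty.

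The reductions also do not go through as written. FUM-colorings are not preserved by any nontrivial permutation of $\{1,2,3,4\}$ (swapping $1$ and $2$, for instance, destroys uniqueness on any face whose maximum was $2$), so ``recoloring one copy so the colors of $uw$ agree'' across a $2$-separation is unavailable. Even if the virtual edge $uw$ did get the same color on both sides, each face of $G$ straddling the cut is the union of a face of $G_1+uw$ and a face of $G_2+uw$, each with its own maximal edge; once $uw$ is deleted there is no reason one of these dominates the other, and you give no repair. The degree-$2$ suppression is likewise more delicate than a one-line count: if $c(uw)=2$ and $uw$ is the unique maximum on one of its two faces, every other edge of that face has color $1$, so the new edge at $w$ is forbidden both $1$ (by its facial neighbour) and $2$ (by $uv$), yet must stay below $2$ on that face while also avoiding a collision with the maximum of the other face.

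For contrast, the paper sidesteps all of this with a Thomassen-style extension lemma: for any plane graph equipped with a set of ``free pairs'' such that every leaf-block has a good vertex on the outer face, there is a facial edge-coloring with $\{1,2,3,4\}$ in which outer-face edges use only $\{1,2,3\}$ and every inner face has a unique maximal edge. The lemma is proved by taking a minimal counterexample and peeling off boundary edges and leaf-blocks; the theorem then follows by removing one outer edge $e$, applying the lemma to $G-e$, and setting $c(e)=4$.
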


Observe that every edge in an embedded graph is facially adjacent to at most four other edges,
therefore one can translate the problem of facial edge-coloring of a plane graph
to a vertex coloring of a plane graph with maximum degree $4$.
Hence, Theorem~\ref{thm:plane5} directly implies $\chiCe{G} \leq 5$ for every plane graph $G$.
Similarly, Theorem~\ref{thm:subcubic} implies that if $G$ is obtained from a plane graph by subdividing every edge, then $\chiCe{G} \leq 4$.

\smallskip
The paper is organized as follows.
In Section~\ref{sec:v}, we prove Theorem~\ref{thm:subcubic} and discuss the FUM-coloring of vertices.
In Section~\ref{sec:e}, we consider the FUM-edge-coloring and prove Theorem~\ref{thm:maine}.
Both proofs, of Theorem~\ref{thm:subcubic} and Theorem~\ref{thm:maine}, use precoloring extension technique
successfully applied by Thomassen~\cite{Tho94} when proving that every planar graph is $5$-choosable.
In Concluding remarks, we present some related results and discuss possible future directions on this topic.

\section{FUM-(vertex-)coloring}
\label{sec:v}

In this section we consider the FUM-coloring of vertices and confirm that Conjecture~\ref{conj:plane4} holds for
several subclasses of plane graphs.

First, we recall a theorem, which is the main tool used in~\cite{FabGor16}, and will prove helpful also in proving our results.
\begin{theorem}[Fabrici and G\"{o}ring \cite{FabGor16}]
	\label{thm:aux}
	Every plane graph has a (not necessarily proper) $3$-coloring with colors black, blue, and red such that
	\begin{itemize}
		\item[(1)] each face is incident with at most one red vertex,
		\item[(2)] each face that is not incident with a red vertex is incident with exactly one blue vertex.
	\end{itemize}
\end{theorem}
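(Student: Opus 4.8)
The plan is to build the colouring greedily along a spanning tree of the dual, using the nested family of fundamental cycles to keep the construction under control. We may assume $G$ is connected (otherwise repeatedly add a bridge inside a face joining two components: this changes neither the faces nor their incident vertices, so a colouring of the resulting connected graph transfers verbatim to $G$). Let $G^{*}$ be the dual of $G$, fix a spanning tree $S$ of $G^{*}$ rooted at the outer face $f_{0}$, and let $T$ be the spanning tree of $G$ consisting of the primal edges whose duals lie outside $S$ (tree--cotree correspondence). For each internal face $f$ let $e_{f}=a_{f}b_{f}$ be the primal edge dual to the edge of $S$ joining $f$ to its parent $p(f)$; then $e_{f}$ lies on the boundary walks of both $f$ and $p(f)$, and the fundamental cycle $D_{f}:=e_{f}\cup P_{T}(a_{f},b_{f})$ is a simple closed curve enclosing exactly the faces in the subtree of $S$ rooted at $f$. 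Consequently the curves $D_{f}$ form a laminar family, and any vertex lying strictly inside $D_{f}$ is incident only to faces in that subtree.

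Now colour the vertices as follows. First colour one arbitrary vertex incident to $f_{0}$ blue, and provisionally every other vertex black. Then process the internal faces in order of non-decreasing depth in $S$. When a face $f$ is reached: if it already carries a blue or red vertex, do nothing; otherwise, if $D_{f}$ has a vertex strictly inside it, colour one such vertex blue (it lies on no previously processed face, so this is safe); otherwise, if one of $a_{f},b_{f}$ can be coloured blue without creating a second blue vertex on an already processed face, do so; and if not, colour red whichever of $a_{f},b_{f}$ can be coloured red without creating a second red vertex on an already processed face (the claim below guarantees one of them can). All vertices left untouched remain black. Since we only ever act in a way that creates neither a face with two blue vertices nor a face with two red vertices, at the end every face has at most one red vertex and, whenever it has none, exactly one blue vertex --- which is exactly (1) and (2).

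The content of the argument is therefore the claim that the procedure never gets stuck: when $f$ has no interior vertex and neither $a_{f}$ nor $b_{f}$ can be made blue, at least one of them can be made red. A vertex of $e_{f}$ is blocked from blue only because an already processed face incident to it carries a blue vertex, and blocked from red only because such a face carries a red vertex; using laminarity to control which already processed faces can be incident to $a_{f}$ or $b_{f}$, one shows that simultaneous double blockage of both endpoints would force two red vertices onto a common face, contradicting the invariant that no face ever acquires two red vertices. Pinning this down, together with the routine check that a newly placed red vertex cannot destroy property (2) for any previously satisfied face, is the one delicate step; everything else is bookkeeping with Euler's formula and standard properties of fundamental cycles, and I expect essentially all of the work to live in this never-stuck lemma.
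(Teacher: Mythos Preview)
This theorem is quoted in the paper without proof; it is attributed to Fabrici and G\"oring, so there is no argument in the present paper to compare your attempt against. What follows is therefore an assessment of your sketch on its own merits.

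Your greedy procedure does not work as stated. Take the wheel $W_n$ ($n\ge 4$): a central vertex $c$ joined to a cycle $v_1\cdots v_n$, with outer face $f_0$ bounded by the cycle and inner triangular faces $f_i=v_icv_{i+1}$. Choose the dual spanning tree $S$ to be the star rooted at $f_0$ with edges dual to $v_iv_{i+1}$; then $e_{f_i}=v_iv_{i+1}$, the cotree $T$ is the primal star $\{cv_1,\dots,cv_n\}$, and each $D_{f_i}$ is the triangle $v_icv_{i+1}$ with no strictly interior vertex. Run your procedure, processing $f_1,f_2,\dots$ in that order: colour $v_1$ blue for $f_0$; $f_1$ already carries $v_1$ and is skipped; at $f_2$ both endpoints $v_2,v_3$ of $e_{f_2}$ are blocked from blue by $f_0$, so you colour one of them red, say $v_2$. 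Now at $f_3$ both endpoints $v_3,v_4$ of $e_{f_3}$ are blocked from blue by $f_0$, and both are blocked from red by the single red vertex $v_2$ sitting on $f_0$. The procedure is stuck.

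This also pinpoints why the never-stuck heuristic fails: the sentence ``simultaneous double blockage of both endpoints would force two red vertices onto a common face'' is simply false --- here both endpoints are red-blocked by the \emph{same} red vertex on the \emph{same} processed face, so nothing contradicts any invariant. The structural reason is that your rule only ever recolours the two endpoints $a_f,b_f$ of the parent edge; in the wheel the vertex that must carry the colour is $c$, and with the star tree $c$ is never a candidate. A different choice of $S$ (a dual path) happens to rescue this particular example, but you neither specify how $S$ is to be chosen nor argue that some choice always succeeds, and proving the existence of a ``good'' dual tree looks no easier than the original statement. The laminar-cycle scaffolding is sound, but the greedy step built on top of it needs a genuinely new idea before this becomes a proof.
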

A slightly stronger version of Theorem~\ref{thm:aux} was proved by Wendland~\cite{Wen16} who
also added the conclusion that each triangle, facial or separating, contains at least one vertex that is not black.
This enabled him to improve the upper bound to $5$ colors.

Recall that Conjecture~\ref{conj:plane4} states that if $G$ is a plane graph, then its FUM chromatic number is 4,
which is the same upper bound as for the chromatic number.
One can therefore ask, which are the plane graphs admitting a FUM-coloring with at most $3$ colors.
However, natural candidates such as graphs of large girth, quadrangulations, and outerplane graphs
have infinitely many examples with FUM chromatic number $4$.

The example in Figure~\ref{fig:girth-v} shows that there is no analogue of Gr\"{o}tzsch's result for the FUM-coloring.
Indeed, every vertex lies on the outer face, and hence only one can be colored with $3$ (assuming $3$ colors suffice). As every vertex is incident to at
most three faces, the maximal color of the fourth face is $2$, and hence all the other vertices should receive $1$,
which is not possible, since the coloring must be proper.
\begin{figure}[htp!]
	\begin{center}
		\includegraphics{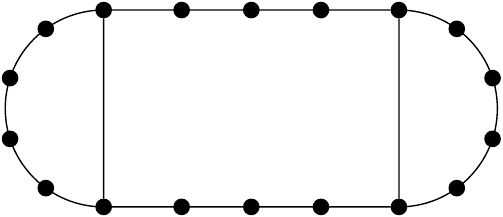}
	\end{center}
	\caption{Plane graphs with arbitrarily large girth (in fact also outerplane graphs) need at least $4$ colors for a FUM-coloring.}
	\label{fig:girth-v}
\end{figure}

We continue by considering plane quadrangulations.
\begin{proposition}
	If $G$ is a plane quadrangulation, then $\chiC{G} \leq 4$.
	Moreover, there exists an infinite family of plane quadrangulations with FUM chromatic number $4$.
\end{proposition}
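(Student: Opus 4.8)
The plan is to prove the upper bound by a precoloring–extension argument in the spirit of the proofs of Theorems~\ref{thm:subcubic} and~\ref{thm:maine}, and the lower bound through a combinatorial reformulation of $3$-colour FUM-colorability. Two structural facts are used throughout: every plane quadrangulation is bipartite, so fix a bipartition $(A,B)$ of $Q$; and since $|E(Q)| = 2|V(Q)|-4$ the average degree of $Q$ is below $4$, while a connected plane graph all of whose faces are $4$-cycles is $2$-connected, so we may assume $Q$ is $2$-connected and hence has a vertex of degree $2$ or $3$.

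For the upper bound I would induct on $|V(Q)|$, proving the stronger assertion that every plane bipartite graph whose bounded faces are all $4$-cycles admits a FUM-coloring with colours $\{1,2,3,4\}$ extending any admissible precoloring of its outer walk; $C_4$ and the other small cases are immediate. For the inductive step one exhibits a reducible configuration. The cleanest is a degree-$2$ vertex $v$ with neighbours $u,w$: deleting $v$ merges the two $4$-faces $[vuxw]$ and $[vwyu]$ incident to $v$ into a single $4$-face $[uxwy]$ (if $x=y$ then $Q=C_4$), so we stay inside the class; applying induction and then recolouring $v$ with a colour from $\{1,2,3,4\}\setminus\{c(u),c(w)\}$ chosen so that $v$ is either dominated by the old maximum of $[uxwy]$ or strictly larger than all of $u,w,x,y$ restores a FUM-coloring. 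When $\delta(Q)=3$ one removes a degree-$3$ vertex, obtaining a hexagonal face, and re-quadrangulates it with a chord before applying induction. An alternative route to $\chiC{Q}\le 4$ is to strengthen Theorem~\ref{thm:aux} for bipartite plane graphs so that the ``blue'' class is independent: then colouring blue with $3$, red with $4$, and the (bipartite) black subgraph properly with $\{1,2\}$ gives a proper $4$-colour FUM-coloring at once.

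The hard part, as is typical for Thomassen-style arguments, is the extension/recolouring case analysis together with choosing the precise admissible precoloring of the outer walk so that the induction closes. For some colourings of the merged face — for example when its unique maximum has colour $4$ and lies at one of the two vertices private to a single reinstated face — both colours available to $v$ can destroy uniqueness on the other reinstated face, forcing a local recolouring of $u$, $w$, or the old maximum vertex, which must be done without spoiling any further face; and the degree-$3$ reduction has the extra nuisance that the inserted chord may already be an edge, to be handled by a small separate argument.

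Finally, for the lower bound observe that a FUM-coloring of $Q$ with three colours is, after renaming, a proper $3$-colouring in which every face carries exactly one vertex of the largest colour; since the subgraph spanned by the remaining vertices is bipartite, $\chiC{Q}\le 3$ if and only if $Q$ admits an independent set meeting every face in exactly one vertex. With $\chiC{Q}\le 4$ this gives $\chiC{Q}\in\{3,4\}$, so it suffices to construct an infinite family of plane quadrangulations whose faces are interlocked enough that no independent set selects exactly one vertex from each; a fixed gadget with this property, iterated, does the job, and the impossibility of the transversal is checked by inspection.
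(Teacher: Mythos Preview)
Your ``alternative route'' for the upper bound is exactly what the paper does, and the paper does it in two lines: apply Theorem~\ref{thm:aux}, send red to $4$ and blue to $3$, and colour the remaining (black) vertices properly with $\{1,2\}$ using that $G$ is bipartite. The paper does not prove or invoke any strengthening making the blue class independent; it simply cites Theorem~\ref{thm:aux} as stated.

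Your main route --- a Thomassen-style precolouring extension via low-degree reducible configurations --- is much more involved, and the difficulty you yourself flag is a genuine gap, not just a tedious case analysis. Concretely, in the degree-$2$ reduction: suppose the merged face on $u,x,w,y$ has its unique maximum at $x$ with $c(x)=4$ and $c(u)=c(w)=3$ (this is consistent with a proper FUM-colouring of $Q-v$, since $u$ and $w$ lie in the same bipartition class and are non-adjacent). Then every legal colour for $v$ fails: $c(v)=4$ ties with $x$ on the reinstated face $\{v,u,x,w\}$, while $c(v)\in\{1,2\}$ leaves both $u$ and $w$ as maxima on the reinstated face $\{v,u,y,w\}$. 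Any repair must recolour one of $u$, $w$, $x$, each of which borders further faces, and you provide no inductive invariant on the outer boundary (``admissible precolouring'' is never defined) that would absorb such changes. The degree-$3$ reduction with an auxiliary chord has the same problem, compounded by the multi-edge issue you mention. As written, the induction does not close, and it is not clear that any reasonable boundary hypothesis makes it close; the paper avoids all of this by going through Theorem~\ref{thm:aux} directly.

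For the lower bound your reformulation --- $\chiC{Q}\le 3$ if and only if $Q$ has an independent set meeting every face exactly once --- is correct (a $4$-face with maximum colour $2$ in a proper colouring would force two vertices of colour $2$), and constructing an explicit gadget, checking it by hand, and iterating it to get an infinite family is exactly what the paper does.
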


\begin{proof}
	Let $G$ be a plane quadragulation.
	A FUM-coloring of $G$ with at most $4$ colors can be obtained by using Theorem~\ref{thm:aux} to assign the colors $3$ and $4$
	such that every face is incident with at most one $4$, and at most one $3$ if it is not incident with $4$;
	the remaining vertices may be colored by $1$ and $2$, since $G$ is bipartite.
	
	To prove the second part of the proposition, consider the graph $H$ depicted in Figure~\ref{fig:quadrangulation}.	
	\begin{figure}[ht]
		$$
			\includegraphics{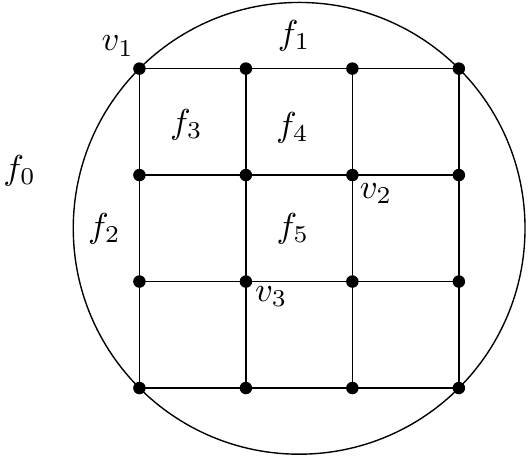}
		$$
		\caption{A plane quadrangulation with FUM chromatic number $4$.}
		\label{fig:quadrangulation}
	\end{figure}
	Suppose $\chiC{H} = 3$. Then, one of the vertices incident with the outer face $f_0$, say $v_1$, must be colored with $3$.
	This sets the maximal color also for the faces $f_1$, $f_2$, and $f_3$. Thus, to provide a unique maximal color for $f_4$,
	we must color the vertex $v_2$ with $3$. Analogously, we must color with $3$ also the vertex $v_3$. 	
	But now, there are two vertices colored with $3$ incident with $f_5$, a contradiction.
	
	One obtains an infinite family of graphs that require $4$ colors, e.g., by inserting a copy of $H$ 
	to the face $f_5$ by gluing the edges of the outer face of $H$ and the edges of $f_5$.
\end{proof}

We establish Conjecture~\ref{conj:plane4} also for the classes of subcubic plane graphs and outerplane graphs.
The following lemma is motivated by Theorem~\ref{thm:aux}, and we use it to prove Theorem~\ref{thm:subcubic}.
The upper bound of $4$ is tight for both classes by, e.g., the graph in Figure~\ref{fig:girth-v}.

\begin{lemma}
	\label{lem:sub}
	Suppose $G$ is a plane graph that is either subcubic or outerplane,
	$P$ is a path in the outer face of $G$ on at most two vertices,
	and the vertices of $P$ are properly colored by a coloring $c'$ with colors $\{1,2,3\}$.
	Then there is a vertex coloring $c$ of $G$ with at most $4$ colors such that
	\begin{itemize}
		\item $c$ matches $c'$ on $P$,
		\item $c(v) \in \{1,2,3\}$ if $v$ is incident with the outer face, and
		\item each inner face has a vertex with unique maximal color.
	\end{itemize}
\end{lemma}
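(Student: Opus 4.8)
The plan is to prove Lemma~\ref{lem:sub} by induction on the number of vertices of $G$, following Thomassen's precoloring-extension philosophy: we identify a ``small'' structural feature (a low-degree vertex or a suitable chord/separation), remove or contract it, color the smaller graph by the induction hypothesis, and then extend the coloring back. The two hypotheses -- subcubic, or outerplane -- are handled largely in parallel, the point being that in both cases every vertex has a small number of incident faces: in a subcubic graph a vertex lies on at most $3$ faces, and in an outerplane graph every vertex lies on the outer face plus at most a bounded number of inner faces along a single ``fan''. In either case, once all but one of the neighbours of a vertex $v$ are colored, the colors already forced as maximal on the inner faces around $v$ leave enough freedom to pick $c(v)$ so that $v$ never becomes a second maximal vertex on any face, and conversely so that some face gets $v$ as its unique maximum only when we want it to.

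\smallskip
First I would dispose of the base and the trivial reductions. If $|V(G)|\le 2$ there is nothing to do beyond respecting $c'$. If $G$ is disconnected or has a cut vertex not on $P$, color the pieces separately (merging along the shared vertex, whose color we treat as precolored in each piece); so we may assume $G$ is $2$-connected, whence the outer boundary is a cycle $C$ and every inner face is bounded by a cycle. I would also ensure the precolored path $P$ lies on $C$. Next, the heart of the argument: find a vertex $v$ on the outer cycle $C$, not in $P$, of small degree -- degree at most $3$ in the subcubic case automatically, and in the outerplane case a vertex of degree $2$ on $C$ or (if $G$ has chords) an ``ear'' vertex cut off by a chord -- and argue we can color $G-v$ (or the appropriate minor) and extend. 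When we delete $v$, the inner faces incident with $v$ merge into one face of $G-v$; after coloring $G-v$ by induction we must check that reinserting $v$ with a suitable color keeps every inner face of $G$ unimax. The inner faces not touching $v$ are untouched. For each inner face $\alpha$ touched by $v$: if $\alpha$'s ``parent'' face in $G-v$ already had its unique maximum at a vertex $\ne v$'s neighbours, we must give $v$ a color no larger than that maximum; if the unique maximum of the parent face sat at a neighbour of $v$ that is now on $\alpha$, we must avoid ties. Since $v$ sees at most two inner faces in the outerplane ear case (or at most three incident faces in the subcubic case, of which one is the outer face where $v$ must get a color in $\{1,2,3\}$), there are at most two constraints of the form ``$c(v)\le$ something'' or ``$c(v)\ne$ something'', and with the palette $\{1,2,3\}$ available on the boundary (plus $4$ available if $v$ is interior, which it is not here) one checks a choice always exists; in the tight corner cases we instead recolor a neighbour or pick which face $v$ serves as the maximum.

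\smallskip
The handling of chords is where the two families genuinely diverge and requires the most care. In the outerplane case a chord $uw$ of $C$ splits $G$ into two outerplane graphs $G_1,G_2$ sharing the edge $uw$; I would first color the ``smaller side'' $G_1$ with the precolored edge/path $uw$ (colors taken from $\{1,2,3\}$, which is legitimate since $u,w$ are on the outer face), then color $G_2$ with $uw$ now serving as its precolored path, and glue. The only subtlety is that an inner face of $G$ lying entirely inside $G_1$ must still be unimax, which is guaranteed by applying the lemma to $G_1$; the inner faces split by the chord are each entirely in one side. In the subcubic case there are no chords of $C$ to worry about in the same way -- a chord would force an endpoint to have its two $C$-neighbours plus the chord, degree $3$, leaving it unable to reach the interior, so the interior is attached only at... -- but one does have to treat separating triangles / $2$-cuts; these are handled by the same split-and-reglue move as the outerplane chords.

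\smallskip
The main obstacle I expect is the bookkeeping in the extension step when several inner faces around the deleted vertex $v$ simultaneously impose upper-bound constraints on $c(v)$: a priori two faces could force $c(v)\le 1$ while a third needs $v$ to be their unique maximum, which with only $\{1,2,3\}$ on the boundary is impossible. Resolving this will require choosing $v$ more cleverly -- e.g.\ always picking a degree-$2$ boundary vertex whose two incident inner faces are ``large'' enough to have had their maxima located elsewhere, or, when forced, recoloring one boundary neighbour of $v$ (permitted since it is not in $P$, or if it is in $P$ then $|P|\le 2$ pins down the configuration and can be checked directly) to free up a color. So the real work is a careful case analysis of the local picture at $v$ -- degree of $v$, which neighbours lie in $P$, how many inner faces $v$ touches and whether each already has its maximum fixed -- showing that in every case either a valid color for $v$ exists outright, or a bounded local recoloring makes one available. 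Everything else (base case, connectivity reductions, chord splits) is routine once this local lemma-within-the-lemma is nailed down.
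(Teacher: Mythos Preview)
Your overall framework---minimal counterexample, reducing to a $2$-connected graph whose outer boundary is a chordless cycle $C$, handling the outerplane case by chord-splitting until $G$ is a bare cycle---matches the paper. The genuine gap is in the subcubic extension step, precisely where you flag ``the main obstacle.''

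You propose to delete a single boundary vertex $v\in C\setminus P$, apply induction to $G-v$, and then choose $c(v)\in\{1,2,3\}$. The difficulty you anticipate is real and, as stated, fatal: when you delete $v$, the inner faces incident with $v$ merge into the outer face of $G-v$, so after induction every vertex on those faces is colored from $\{1,2,3\}$ with \emph{no control whatsoever} over how many times the maximum color appears. Reinserting $v$ (also forced into $\{1,2,3\}$) gives you one value to choose, but two inner faces to repair, each of which may have its current maximum repeated arbitrarily often. No amount of ``choosing $v$ cleverly'' fixes this, and your proposed fallback of recoloring the interior neighbour $u$ of $v$ to $4$ after the fact does not work either: $u$ may lie on a third inner face of $G$ that remained an inner face of $G-v$, and that face may already contain a vertex colored~$4$.

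The paper's key idea, which you are missing, is to delete \emph{two} vertices simultaneously: the boundary vertex $v$ together with an interior vertex $u$ that is either adjacent to $v$ (if $\deg(v)=3$) or shares an inner face with $v$ (if $\deg(v)=2$). Because $u$ has degree at most $3$, deleting $u$ merges all faces incident with $u$ into one; since one of those faces contains the outer-cycle vertex $v$, deleting $v$ as well merges the whole lot into the outer face of $G'=G-\{u,v\}$. Hence every vertex on every face incident with $u$ in $G$ lies on the outer face of $G'$ and is colored at most $3$ by induction. Now set $c(u)=4$: this makes $u$ the unique maximum on \emph{every} inner face of $G$ containing $u$, in one stroke. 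Finally $v$ has only its two outer-cycle neighbours to avoid, so some color in $\{1,2,3\}$ is free. The inner faces at $v$ are exactly (a subset of) the faces at $u$, so they are already handled. This two-vertex deletion with the interior vertex receiving color $4$ is the missing mechanism; once you have it, the ``careful case analysis'' you anticipate evaporates.
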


\begin{proof}
	Let $G$ be a smallest counterexample in terms of the number of vertices and with largest path $P$.
	Clearly, we may assume $G$ has at least $2$ vertices.
	If $G$ has more than one component incident with the outer face, 
	then, by the minimality of $G$, for each of these components, we can color the subgraph induced by the component 
	and the vertices in its interior.	
	The colorings of all such subgraphs together give us a required coloring of $G$, a contradiction.
	Hence, we may assume that precisely one component of $G$ is incident with the outer face.
	If $P$ has less than two vertices, we extend $P$ arbitrarily by coloring one of its neighbors on the outer face.
	Hence $P$ has two vertices.

	We split the rest of the proof into four claims.
\begin{claim}
	The outer face of $G$ is bounded by a cycle.
\end{claim}

\begin{proofclaim}
	Suppose for a contradiction that $v$ is a cut-vertex in $G$ incident with the outer face.
	Let $W$ be the set of vertices consisting of $v$ and the vertices of the connected component of $G-v$ that intersects $P$.
	Let $X = (V(G) \setminus W) \cup \{v\}$.
	By the minimality of $G$, there exists a coloring $c_W$ of $G[W]$ with a path $P_W = P$ and a coloring $c_W'=c'$,
	and there exists a coloring $c_X$ of $G[X]$ with $P_X=\{v\}$ and $c_X'$ being $c_W$ restricted to $v$.
	Since the colorings $c_W$ and $c_X$ assign the same color to $v$, they can be combined into a coloring $c$ of $G$, a contradiction.
\end{proofclaim}

Denote by $C$ the cycle by which the outer face of $G$ is bounded.

\begin{claim}
	\label{cl:chord}
	$C$ has no chords.
\end{claim}

\begin{proofclaim}
	Suppose for a contradiction that $uv$ is a chord in $C$.
	Let $W$ be the set of vertices containing $u$, $v$, and the vertices of the connected component of $G - \{u,v\}$ that intersects $P$.
	Let $X = (V(G) \setminus W) \cup \{u,v\}$.
	By the minimality of $G$, there exists a coloring $c_W$ of $G[W]$ with $P_W = P$ and $c_W'=c'$, and
	there exists a coloring $c_X$ of $G[X]$ with $P_X=\{u,v\}$ and $c_X'$ being $c_W$ restricted to $u$ and $v$.
	Since the colorings $c_W$ and $c_X$ assign the same colors to $u$ and $v$,
	they can be combined into a coloring $c$ of $G$, a contradiction.
	Hence $C$ is a chordless cycle.
\end{proofclaim}

If $G$ is outerplane, it follows from Claim~\ref{cl:chord} that it must be a cycle.

\begin{claim}
	\label{cl:cycle}
	$G$ is not a cycle.
\end{claim}

\begin{proofclaim}
	Suppose for a contradiction that $G$ is a cycle.
	The coloring $c'$ assigns the color $3$ to at most one vertex of $P$.
	Hence it is possible to color the vertices of $G$ such that exactly one vertex $x$
	is colored with $3$ and all the others are colored with $1$ and $2$.
	The interior face of $G$ then has $x$ as the unique vertex colored by the maximal color.
\end{proofclaim}

Hence, $G$ is not outerplane, so it is subcubic.
Moreover, it contains at least one vertex, which is not in $C$; we call such vertices \emph{interior}.

\begin{claim}
	\label{mainclaim}
	In $V(C) \setminus V(P)$, there is no vertex of degree $3$  with an interior neighbor,
	nor a vertex of degree $2$ that is incident with a same face as any interior vertex.
\end{claim}

\begin{proofclaim}
	Suppose for a contradiction that $v \in V(C) \setminus V(P)$ is a vertex of degree $3$ with an interior neighbor $u$,
	or a vertex of degree $2$ and $u$ is an interior vertex incident with a same face as $v$.
	Let $G'$ be the graph obtained from $G$ by deleting $u$ and $v$.
	By the minimality of $G$, there is a coloring $c$ of $G'$ satisfying the assumptions of Lemma~\ref{lem:sub}.
	Notice that all the vertices incident with the same faces as $u$ in $G$ are incident with the outer face in $G'$ (except for $v$).
	Hence the neighbors of $u$ are colored by $c$ with the colors in $\{1,2,3\}$.
	We extend $c$ to $G$ by setting $c(u)=4$ and assigning to $v$ a color from $\{1,2,3\}$, which does not appear on its two neighbors on the outer face,
	a contradiction.
\end{proofclaim}

From Claim~\ref{mainclaim}, it follows that if $G$ is a subcubic plane graph,
there are only vertices of degree $2$ in $V(C) \setminus V(P)$. Moreover,
if there is an interior vertex in $G$, then it is incident with the same face as one
of the vertices in $V(C) \setminus V(P)$.
Hence, Claims~\ref{cl:cycle} and~\ref{mainclaim} give us a contradiction on existence of $G$.
This finishes the proof of Lemma~\ref{lem:sub}.
\end{proof}

Now, we are ready to prove the main theorem of this section.

\begin{proof}[Proof of Theorem~\ref{thm:subcubic}]
	Let $G$ be a plane subcubic graph or an outerplane graph and $v$ any vertex in the outer face of $G$.
	Apply Lemma~\ref{lem:sub} on the graph $G-v$ and color $v$ by $4$ to complete the coloring of $G$.
\end{proof}

\section{FUM-edge-coloring}
\label{sec:e}

In this section we turn our attention to the FUM-edge-coloring.
Notice that the upper bound of $4$ is the same as in the vertex version, and as already remarked,
the edge version is only a special case of the former. However, also here, the upper bound is achieved
within very particular classes of plane graphs, e.g., subcubic outerplane bipartite graphs of arbitrarily large girth
(see Figure~\ref{fig:girth-e} for an example).
\begin{figure}[htp!]
	\begin{center}
	\includegraphics{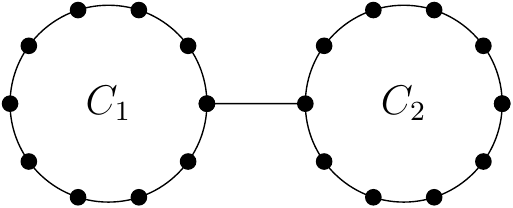}
	\end{center}
	\caption{Subcubic outerplane bipartite graphs of arbitrarily large girth need $4$ colors for FUM-edge-coloring.}
	\label{fig:girth-e}
\end{figure}
However, regarding Conjecture~\ref{conj:main}, Theorem~\ref{thm:maine} is the first result supporting it.

Let $G$ be a plane graph.
If an edge $e=uv$ is removed from $G$, new facial adjacencies of edges may be introduced around $u$ and $v$ in $G-e$.
However, if we are interested only in a facial edge-coloring of $G$,
these new adjacencies may be ignored when coloring $G-e$.
This motivates the following concept:
let $\mathcal{F}$ be a set of pairs of edges.
An \emph{$\mathcal{F}$-facial edge-coloring} is an edge-coloring, where every pair of facially adjacent edges
that are not in $\mathcal{F}$ receive distinct colors.
We call $\mathcal{F}$ the set of \emph{free pairs}.
Two edges are a \emph{good pair} if they are a free pair or if they have a vertex of degree $2$ in common.
If a vertex $v$ is a common vertex of the edges in a good pair, we call $v$ a \emph{good vertex}.

Recall that every graph $G$ can be decomposed into maximal 2-connected blocks.
The \emph{block graph} $B(G)$ is an intersection graph of blocks in $G$.
Notice that $B(G)$ is a tree and hence has at least two leaves (unless $G$ is $2$-connected).
We call a block corresponding a leaf a \emph{leaf-block}.

\begin{observation}
	\label{obs:leaves}
	Let $G$ be a $2$-connected graph.
	If $uv$ is an edge of $G$, then $\{u,v\}$ intersects the set of vertices of every leaf-block of $G-uv$.
\end{observation}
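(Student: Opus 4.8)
The plan is to work entirely at the level of block structure after deleting the edge. Write $G' = G-uv$. Since $G$ is $2$-connected (hence has at least three vertices and is $2$-edge-connected), $G'$ is connected. If $G'$ happens to be $2$-connected there is nothing to prove: then $G'$ is its own unique block, which we regard as a leaf-block, and it obviously contains both $u$ and $v$. So I would assume $G'$ has at least one cut-vertex and let $B$ be an arbitrary leaf-block of $G'$, i.e.\ a block corresponding to a leaf of the tree $B(G')$. Such a $B$ contains exactly one cut-vertex $w$ of $G'$, and every vertex of $B$ other than $w$ is a non-cut-vertex of $G'$.

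Next I would argue by contradiction, assuming $\{u,v\}\cap V(B)=\emptyset$. The key structural input is the standard fact about blocks: if $x\in V(B)\setminus\{w\}$, then $x$ lies in a single block of $G'$, so all of its $G'$-neighbors lie in $V(B)$. Since $u,v\notin V(B)$, no such $x$ equals $u$ or $v$, so restoring the edge $uv$ does not enlarge the neighborhood of $x$; thus $N_G(x)\subseteq V(B)$ for every $x\in V(B)\setminus\{w\}$. One also records the easy facts that $w\notin\{u,v\}$, that $V(B)\setminus\{w\}\neq\emptyset$ (a block of a connected graph on at least two vertices has at least two vertices), and that $V(G)\setminus V(B)\neq\emptyset$ (it contains $u$).

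Finally I would deduce that $w$ is a cut-vertex of $G$, contradicting $2$-connectivity. Indeed, in $G-w$, any walk starting from a vertex of $V(B)\setminus\{w\}$ can never leave $V(B)$, by the neighborhood containment just established; hence the nonempty sets $V(B)\setminus\{w\}$ and $V(G)\setminus V(B)$ lie in different components of $G-w$, so $G-w$ is disconnected. This contradiction shows every leaf-block of $G'$ meets $\{u,v\}$. I do not expect a serious obstacle here; the only point requiring care is exactly the place where the restored edge $uv$ might reconnect the two sides across $w$ — and that is precisely what the assumption $\{u,v\}\cap V(B)=\emptyset$ rules out — together with the minor bookkeeping over the degenerate cases ($G'$ already $2$-connected, or $B$ a single edge).
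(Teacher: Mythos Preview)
Your argument is correct. The paper states this as an observation without proof, so there is nothing to compare against; your write-up simply supplies the routine block-tree reasoning that the authors left implicit.
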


The following lemma is the core of the proof of Theorem~\ref{thm:maine}.

\begin{lemma}
	\label{lem:maine}
	Let $G$ be a plane graph and let $\mathcal{F}$ be a set of free pairs,
	where every leaf-block of $G$ has a good vertex in the outer face.
	Then there exists an $\mathcal{F}$-facial edge-coloring $c$ using colors in $\{1,2,3,4\}$ such that
	\begin{itemize}
		\item{} every edge in the outer face is colored with a color in $\{1,2,3\}$, and
		\item{} every face, except the outer face, has an edge of a unique maximal color.
	\end{itemize}
\end{lemma}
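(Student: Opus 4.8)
The plan is to mimic the precoloring-extension / minimal-counterexample scheme used in Lemma~\ref{lem:sub}: take $G$ together with $\mathcal F$ to be a counterexample minimizing first $|V(G)|$ and then $|E(G)|$, and derive a contradiction by exhibiting a local reduction. The key is that the hypothesis is stated for \emph{every} leaf-block having a good vertex on the outer face, which is exactly the invariant that a reduction must preserve; whenever we delete an edge or vertex we must check that the new leaf-blocks still have good vertices (possibly by enlarging $\mathcal F$, i.e. declaring some newly-created facial adjacencies to be free pairs). First I would dispose of the disconnected case (color components separately, since faces of one component are unaffected by another) and the case where $G$ has a cut-vertex: if $v$ is a cut-vertex on the outer face, split $G$ at $v$ into the "leaf side" $W$ and the rest $X$, apply minimality to each — the leaf-blocks of $W$ are leaf-blocks of $G$ so they have good vertices, and in $X$ one new leaf-block is created at $v$, but the edges at $v$ in $X$ all lie on the outer face of $X$ and one checks a good vertex exists there — and then paste the colorings, which agree because colors on the outer face live in $\{1,2,3\}$ and nothing forces a conflict (the facial adjacencies through $v$ that cross the cut are free or can be added to $\mathcal F$). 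After this $G$ is $2$-connected, so its outer boundary is a cycle $C$; as in Lemma~\ref{lem:sub} one shows $C$ is chordless by the same split-at-a-chord argument, using Observation~\ref{obs:leaves} to certify that each side still contains a good vertex of the relevant leaf-block.

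Once $G$ is $2$-connected with chordless outer cycle $C$, the heart of the argument is a discharging-free structural claim analogous to Claim~\ref{mainclaim}: I would argue that there must exist an edge $e=uv$ with $u$ or $v$ a good vertex on the outer face such that deleting $e$ (and possibly a pendant structure) yields a smaller instance satisfying the hypotheses. Concretely, pick a good vertex $w$ on $C$ guaranteed by the hypothesis for some leaf-block — if $G$ itself is $2$-connected it is its own leaf-block, so $w$ exists on $C$ with two edges $e_1,e_2$ at $w$ forming a good pair (free, or $w$ has degree $2$). The reduction is: remove one of these edges, say $e_1=wx$. In $G-e_1$, by Observation~\ref{obs:leaves} the vertices $\{w,x\}$ meet every leaf-block, and because $w,x$ lie on the outer face of $G$, every such leaf-block has one of $w,x$ on its outer face; we only need that $w$ or $x$ is a \emph{good} vertex in that block, which is where we use the freedom to put the pair of $G-e_1$-edges at $w$ (or $x$) into $\mathcal F$. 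Apply minimality to get a coloring $c'$ of $G-e_1$ with outer edges in $\{1,2,3\}$ and every non-outer face uniquely-max. Now reinstate $e_1$: it was on the outer boundary of $G$, and the faces of $G$ incident to $e_1$ are the outer face (no condition) and at most one inner face $\alpha$ which, in $G-e_1$, was part of the outer face — so $\alpha$ currently has \emph{no} constraint satisfied, but also no maximal-color edge other than what we choose. Color $e_1$ with $4$: then $e_1$ is the unique maximum-colored edge of $\alpha$ (every other edge of $\alpha$ was an outer edge of $G-e_1$, hence colored in $\{1,2,3\}$), while no other inner face gains a second $4$ because $e_1$'s only other incident face is the outer face. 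We must also respect facial adjacency: $e_1$'s facial neighbors along $\alpha$ are colored in $\{1,2,3\}\ne 4$, and its facial neighbors along the outer face are the pair at $w$ (and at $x$) — the one at $w$ is a good pair so may receive $4$; the one at $x$ can be handled by the same trick or by having chosen $\mathcal F$ to free it, or by observing $x$ can be incorporated into a degree-$2$/free configuration. Finally $e_1$ itself must get a color in $\{1,2,3\}$ to satisfy the first bullet — here is the subtlety: if $e_1$ is needed on the outer face with a low color we instead give $4$ to a \emph{different} edge of $\alpha$. I would therefore split into cases on the degree of $w$ (and whether $e_1,e_2$ is free or a degree-$2$ pair) exactly as Claim~\ref{mainclaim} splits into the degree-$3$ and degree-$2$ cases.

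The main obstacle I expect is the bookkeeping around the set $\mathcal F$ of free pairs after a reduction: deleting an edge $e=uv$ creates new facial adjacencies "wrapping around" $u$ and $v$, and to invoke minimality we must have already declared the right ones free so that the coloring of $G-e$ has no obligations we cannot later satisfy when $e$ is restored; simultaneously we must \emph{not} free so much that the phrase "every leaf-block has a good vertex on the outer face" becomes vacuous or that two old edges which become facially adjacent in $G$ are forced equal. Getting a clean statement of which pairs to add — essentially, the two pairs created at $u$ and at $v$ — and verifying it both preserves the hypothesis (via Observation~\ref{obs:leaves}) and suffices for the extension step is the crux; once that is pinned down, the color-$4$ extension itself is the same short argument as in Claim~\ref{mainclaim}. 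A secondary obstacle is ensuring the reduction can always be located: that a $2$-connected plane graph with chordless outer cycle always has a good vertex on $C$ whose removal of one incident edge does not merge $C$ into something that violates chordlessness or $2$-connectivity in a fatal way; handling the small/degenerate cases (the outer cycle itself, a single edge, a triangle) separately, in the spirit of Claim~\ref{cl:cycle}, should close those gaps.
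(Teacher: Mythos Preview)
Your overall scheme is in the right spirit, but the central reduction step has a genuine gap that case--splitting on $\deg(w)$ will not repair.

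You propose to delete a single outer edge $e_1=wx$, color $G-e_1$ by minimality, and then supply the unique maximum for the inner face $\alpha$ that reappears when $e_1$ is restored. As you yourself notice, $e_1$ cannot receive the color $4$ because it lies on the outer face. Your fallback, ``give $4$ to a different edge of $\alpha$,'' does not work: every other boundary edge $g$ of $\alpha$ is already colored (it was an outer edge of $G-e_1$, so $c'(g)\in\{1,2,3\}$), and in $G$ the face $\beta$ on the \emph{other} side of $g$ is an inner face of $G-e_1$ which already carries its own unique-maximum edge. Recoloring $g$ to $4$ may give $\beta$ two edges of color $4$, or may conflict with a facially adjacent edge of $\beta$ already colored $4$; you have no control over $\beta$. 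So removing one outer edge simply does not leave a legal place to put the required $4$.

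The paper's key idea is to remove \emph{two} edges at once: the outer edge $e=uv$ (with $u$ a good vertex) together with an \emph{interior} edge $f$ at $v$ that is facially adjacent to $e$. In $G-e-f$ both inner faces incident to $f$ merge into the outer face, so by minimality all their boundary edges get colors in $\{1,2,3\}$. Now set $c(f)=4$: since $f$ is not on the outer boundary of $G$ this is allowed, and $f$ becomes the unique maximum on \emph{both} of its inner faces simultaneously. Then $e$ has at most one non-free facial neighbor at $u$ (because $u$ is good) and at most one relevant neighbor at $v$ other than $f$, so a color in $\{1,2,3\}$ remains for $e$. This is configuration~(B) in the paper; your bookkeeping instinct about adding the ``wrap-around'' pairs at $u$ and $v$ to $\mathcal F$ is exactly what is done, and Observation~\ref{obs:leaves} certifies the leaf-block hypothesis in $G-e-f$.

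When no such interior edge $f$ exists at any good vertex of a leaf-block $B$, every edge of $B$ lies on the outer face; together with the exclusion of degree-$1$ vertices (configuration~(A)), this forces $B$ to be a cycle. The paper then deletes all of $B$, colors the rest by minimality, and colors the cycle $B$ greedily with $\{1,2,3\}$ using exactly one $3$ (with a short case check at the unique attaching vertex). This replaces your disconnected/cut-vertex/chordless reductions entirely: the argument works leaf-block by leaf-block without first forcing $G$ to be $2$-connected.
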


\begin{proof}
	Let $G$ be the smallest counterexample in terms of the sum of the number of vertices and edges.

	First we outline a process of removing an edge from $G$.
	Let $e=uv$ be an edge of $G$.
	Suppose $u$ is a vertex of degree at least $4$.
	Observe that in $G-e$, the edges $e_1$ and $e_2$ that were facially adjacent to $e$ at vertex $u$
	are not facially adjacent to each other in $G$, but they are facially adjacent in $G-e$.
	Hence, when considering $G-e$, we modify $\mathcal{F}$ by adding the pair $\{e_1,e_2\}$.
	This means $u$ is a good vertex in $G-e$. Similarly, $v$ is good, since it is either a common vertex of a free pair
	or it has degree at most $2$ in $G-e$.
	Hence, by Observation~\ref{obs:leaves}, every leaf-block in $G-e$ contains a good vertex.

	We next describe two configurations that cannot appear in $G$.
	\begin{itemize}
		\item[(A)] \emph{There is no vertex of degree $1$ in the outer face of $G$.}
		
		\smallskip
		Suppose for a contradiction that $u$ is a vertex of degree $1$ in the outer face
		and let $e=uv$ be the edge incident with $u$.
		Let $G'$ be obtained from $G$ by removing $u$,
		and let $\mathcal{F}'$ be obtained from $\mathcal{F}$ by including any facially adjacent pair of edges
		in $G'$ that are not facially adjacent in $G$.
		By the minimality of $G$, there exists an $\mathcal{F}'$-facial edge-coloring $c'$ of $G'$.
		Since $e$ is facially adjacent to at most two edges in $G$, there is at least one available color in $\{1,2,3\}$.
		Hence, $c'$ can be extended to an $\mathcal{F}$-facial edge-coloring of $G$, a contradiction.

		\item[(B)]  \emph{There is no edge $e$ in the outer face joining a good vertex $u$ with a vertex $v$
		such that $u$ and $v$ are in the same block, $v$ is incident with an edge $f$ that is not in the outer face,
		$f$ is facially adjacent with $e$, and $e$ is in a good pair with some edge incident to $u$ (see Figure~\ref{fig:remove}).}
		\begin{figure}[htp!]
			\begin{center}
			\includegraphics{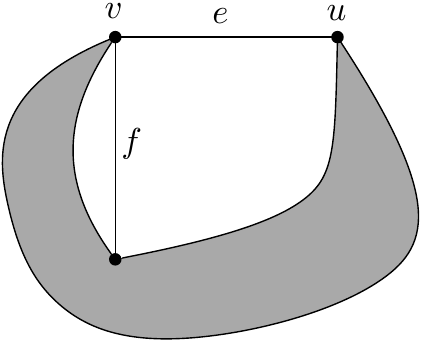}
			\end{center}
			\caption{Situation in the configuration (B) in Lemma~\ref{lem:maine}.}
			\label{fig:remove}
		\end{figure}		
		
		\smallskip
		Suppose for a contradiction that there exists such an edge $e$ in $G$.
		Let $G'$ be obtained from $G$ by removing the edges $e$ and $f$  		
		and let $\mathcal{F}'$ be obtained from $\mathcal{F}$ by including any facially adjacent pair of edges
		in $G'$ that are not facially adjacent in $G$.
		By the minimality of $G$, there exists an $\mathcal{F}'$-facial edge-coloring $c'$ of $G'$.		
		Notice that the edges of both faces with which $f$ is incident in $G$ become incident with the outer face of $G'$.
		Hence, setting $c'(f) = 4$ does not create any conflict with the other edges and it is the unique maximal color for the two faces in $G$.
		Since $e$ is in a good pair at $u$, there is at most one facially adjacent edge with $e$ at $u$ in $G$.
		There might be two facially adjacent edges with $e$ at $v$, but one of them is $f$ and as $c'(f)=4$,
		there is a color in $\{1,2,3\}$ for $e$ that is not conflicting with the edges that are facially adjacent with $e$.
		This gives a contradiction.
	\end{itemize}

	Now, let $B$ be a leaf-block in $B(G)$.
	Hence, there is at most one vertex $v \in V(B)$ with neighbors in $V(G) \setminus V(B)$,
	and it contains at least one good vertex by assumption.
	Observe that if $B$ contains an edge not incident with the outer face, then a configuration described in (B) would occur.
	Thus we may assume that every edge in $B$ is incident with the outer face. Furthermore, by (A), $B$ is a cycle.

	Let $G'$ be the graph obtained from $G$ by removing all the edges of $B$
	and let $\mathcal{F}'$ be obtained from $\mathcal{F}$ by
	including any facially adjacent pairs of edges in $G'$ that are not facially adjacent in $G$.

	By the minimality of $G$, there exists an $\mathcal{F}'$-facial edge-coloring $c'$ of $G'$
	satisfying the assumptions of the lemma.
	Now we show that $c'$ extends to $G$.
	Since $B$ is a cycle, it bounds some inner face which thus needs a unique maximal color.
	This is achieved by coloring exactly one edge of $B$ by the color $3$ and all the other edges by $1$ and $2$.

	Let $e_1$ and $e_2$ be the edges of $B$ incident with $v$. They may be facially adjacent in $G$ to edges of $G'$ that are colored by $c'$.
	Hence, each of $e_1$ and $e_2$ has two available colors and the other edges of $B$ have three available colors.
	If the color $3$ is available on $e_i$ for some $i \in \{1,2\}$, we assign $c'(e_i)=3$,
	and the remaining edges of $B$ can be colored greedily starting from $e_{3-i}$ using only the colors $1$ and $2$, a contradiction.
	Hence both, $e_1$ and $e_2$, have only the colors $1$ and $2$ available.
	Now, $B$ can be colored by coloring any edge except $e_1$ and $e_2$ by $3$ and the remaining edges of $B$, including $e_1$ and $e_2$,
	by alternating the colors $1$ and $2$. This gives a contradiction establishing Lemma~\ref{lem:maine}.
\end{proof}

We finish this section by presenting a proof of Theorem~\ref{thm:maine}.
\begin{proof}[Proof of Theorem~\ref{thm:maine}]
	Let $G$ be a $2$-(vertex-)connected plane graph.
	Let $e=uv$ be any edge in the outer face of $G$.
	Let $G'$ be the graph obtained from $G$ by removing $e$, and let $\mathcal{F}'$ be the set of facially adjacent pairs of edges in $G'$
	that are not facially adjacent in $G$.
	Notice that each of $u$ and $v$ is a good vertex in $G'$.
	Since $G$ is $2$-connected, the block graph of $G'$ is a path with $u$ and $v$ contained in the blocks (or the only block in the case when $G'$ is also $2$-connected)
	corresponding to the endvertices of the path.
	Hence, $G'$ and $\mathcal{F}'$ satisfy the assumptions of Lemma~\ref{lem:maine} and there exists
	an $\mathcal{F}'$-facial edge-coloring $c'$ of $G'$, which can be extended to a FUM-edge-coloring of $G$ by setting $c'(e) = 4$.
\end{proof}

\section{Concluding remarks}

For both variants of FUM-colorings, vertex and edge, the proposed upper bound is set at $4$ colors.
We have shown that there is no analogy with proper colorings, where some subclasses of plane graphs require at most $3$ colors.
On the other hand, we have not been able to disprove any of the two conjectures.

Although the problem of FUM-coloring is intriguing already in the class of plane graphs,
the concept can be naturally studied also for graphs embedded in higher surfaces. Youngs~\cite{You96}
proved that the chromatic number of any quadrangulation of the projective plane is either $2$ or $4$.
In Figure~\ref{fig:projective}, we present an example of projective plane graph needing $5$ colors (we leave the proof to the reader).
\begin{figure}[htp!]
	\begin{center}
		\includegraphics{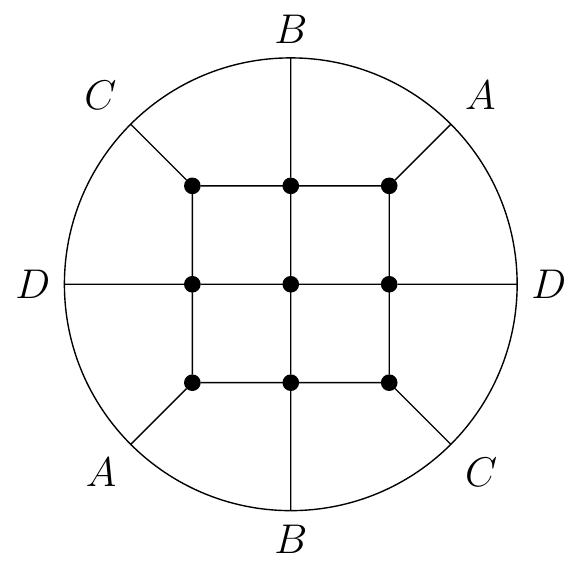}
	\end{center}
	\caption{Projective quadrangulation needing $5$ colors for a FUM-coloring.}
	\label{fig:projective}	
\end{figure}
One may therefore ask, what is the FUM chromatic number of graphs embedded in higher surfaces?
How does it behave if we add assumption on minimum face length or girth?

In~\cite{Wen16}, the author studied the list version of the problem, and he showed that having lists of size $7$
suffices for FUM-coloring of any plane graphs. He proposed the following conjecture.
\begin{conjecture}[Wendland~\cite{Wen16}]
	If each vertex of a plane graph is assigned a list of $5$ integers, then there
	exists a FUM-coloring assigning each vertex a color from its list.
\end{conjecture}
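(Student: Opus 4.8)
The plan is to attack this, the list analogue of Wendland's proven bound of $5$ (Theorem~\ref{thm:plane5}), by the same precoloring-extension machinery of Thomassen~\cite{Tho94} that already drives Lemma~\ref{lem:sub} and Lemma~\ref{lem:maine}, but now dragging along the extra bookkeeping needed to certify a \emph{unique maximum} on every bounded face. As with $5$-choosability, I would not try to prove the bare statement but a stronger inductive claim about a plane graph $G$ whose outer boundary is an induced cycle $C$: a short subpath $P$ of $C$ on at most two vertices is precolored, every other vertex of $C$ carries a list of size at least $3$, every interior vertex a list of size at least $5$, and one asks for an extension in which each \emph{bounded} face has a unique vertex of maximal color. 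The gap between list size $5$ and the roughly three colors that properness alone consumes is exactly the slack I intend to spend on the maximum condition, mirroring how the two colors $\{1,2\}$ are kept free for the alternating completion in the proofs of Lemma~\ref{lem:sub} and Lemma~\ref{lem:maine}.

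The inductive reductions would follow the style of those lemmas rather than Thomassen's verbatim: take a smallest counterexample, split off cut vertices incident with the outer face and split along chords of $C$ exactly as in the Claims inside Lemma~\ref{lem:sub}, and otherwise peel $C$ inward along a neighbor of the precolored path. When a boundary vertex $v$ is removed, its interior neighbors become boundary vertices of the smaller graph and so must be supplied with lists of size $3$; one then recolors $v$ from its size-$\ge 3$ list avoiding its two colored boundary neighbors, spending the remaining freedom to make $v$ either the witness of the bounded faces it closes off or strictly below the witness already secured inside. A crucial structural point, and the reason I cannot simply reuse the near-triangulation framework of Thomassen, is that \emph{one may not triangulate} $G$: inserting diagonals changes the faces, and a unique maximum on the triangular pieces need not give one on the original face. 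Consequently the induction must carry arbitrary inner-face lengths, maintaining for each bounded face a recorded maximum-witness together with a guarantee that its chosen integer strictly dominates all colors still to be placed on that face. A further subtlety is that the ``add one dominating vertex of color $4$'' device used to finish Theorem~\ref{thm:subcubic} has no list counterpart, since no value is guaranteed to exceed everything; the inductive statement must therefore pre-commit the maximum of the outer face to lie on $P$ and track it explicitly, rather than supply it afterward.

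The main obstacle, and the real content of the gap between Wendland's provable $7$ and the conjectured $5$, is the interaction between the arbitrariness of the integers in the lists and the global, per-face nature of the maximum. A locally greedy choice respecting properness and the current witness constraint may, several steps later, be forced to place a second equally large color on a face whose witness was fixed earlier, simply because the surviving lists on that face happen to contain only large integers; with lists of size $7$ there is always room to dodge every stored maximum, whereas with size $5$ one must argue that a \emph{carefully chosen} peeling order—plausibly aligned with the black/blue/red structure of Theorem~\ref{thm:aux}, so that each face's witness is committed to its eventual ``red'' vertex—leaves, at every vertex and for every incident face \emph{simultaneously}, both a properly-coloring choice and a choice consistent with all stored maxima. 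The cleanest way to formalize this is to encode the maximum condition as forbidding, on each face, every integer exceeding that face's designated witness, turning the whole problem into a single list-coloring instance with dynamically shrinking lists; the crux then becomes a quantitative bound showing the per-vertex shrinkage never exceeds the slack built into size $5$. Establishing that simultaneous guarantee is where I expect the argument to be genuinely hard, and it is precisely the step that current techniques, including those of Lemma~\ref{lem:sub} and Lemma~\ref{lem:maine}, do not yet deliver in the full plane case.
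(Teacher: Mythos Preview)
The statement you are addressing is Conjecture~3, not a theorem: the paper offers no proof, and to the best of current knowledge the conjecture remains open. There is therefore nothing in the paper to compare your proposal against. What you have written is not a proof but a research outline, and you yourself say so in your final paragraph: the step where the per-vertex list shrinkage from the unique-maximum constraint must be bounded by the slack available in lists of size~$5$ is ``precisely the step that current techniques \ldots\ do not yet deliver.'' That is the entire difficulty; everything before it (minimal counterexample, cut-vertex and chord reductions, peeling the outer cycle) is standard scaffolding that Wendland already uses to get the bound of~$7$.

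To be concrete about why the gap is real rather than merely unfinished: your proposed encoding---forbid on each face every integer exceeding that face's designated witness---can remove arbitrarily many integers from a single vertex's list, since a vertex may lie on many faces whose witnesses carry unrelated integers. With the fixed palette $\{1,\ldots,5\}$ this shrinkage is globally bounded, which is exactly why the non-list bound of~$5$ goes through; with arbitrary lists there is no such bound, and no ordering of the peeling can by itself prevent a vertex from losing more than two list elements to face-maximum constraints. Aligning the witnesses with the red vertices of Theorem~\ref{thm:aux} does not help, because that theorem gives no control over which integers appear in the lists of the red vertices. So your plan, as stated, does not close the gap between~$7$ and~$5$, and should be presented as a heuristic for why the conjecture is plausible rather than as a proof attempt.
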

We believe that in FUM-edge-coloring, the upper bound for the list version is the same as for the ordinary.
\begin{conjecture}
	If each edge of a plane graph is assigned a list of $4$ integers, then there
	exists a FUM-edge-coloring assigning each edge a color from its list.
\end{conjecture}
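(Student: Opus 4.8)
The plan is to try to reproduce the precoloring-extension argument behind Theorem~\ref{thm:maine}: formulate a list analogue of Lemma~\ref{lem:maine} and derive the conjecture from it exactly as Theorem~\ref{thm:maine} is derived, first for $2$-vertex-connected plane graphs and then, by the usual block decomposition, in general. Concretely, keep all the hypotheses of Lemma~\ref{lem:maine} (plane $G$, a set $\mathcal{F}$ of free pairs, every leaf-block with a good vertex on the outer face) and in addition equip every edge $e$ with a list $L(e)$ of integers, requiring $|L(e)|\ge 4$ for edges not on the outer face and $|L(e)|\ge 3$ for edges on the outer face --- the latter meant to play the role of the restriction to the ``non-maximal'' colours $\{1,2,3\}$ and the former the role of allowing the ``maximal'' colour $4$. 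One then takes a minimal counterexample and runs the same reductions as in the original proof: no vertex of degree $1$ on the outer face (configuration (A)), no edge of type (B), and finally the removal of all edges of a leaf-block $B$, which as before forces $B$ to be an induced cycle whose every edge lies on the outer face and which meets the rest of $G$ in a single cut-vertex $v$. All of this scaffolding --- good vertices and good pairs, updating $\mathcal{F}$ when deleting an edge at a vertex of degree $\ge 4$, Observation~\ref{obs:leaves}, and the fact that the block graph of $G'$ in the final step is a path --- is colour-agnostic and transfers verbatim.

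Two of the steps currently insert a concrete colour and must be replaced by list arguments that extract from a size-$4$ list a colour that is at once (i) distinct from the at most four already-coloured facially adjacent edges and (ii) strictly larger than every colour on the inner face it must dominate. For the leaf-cycle step this becomes a self-contained statement about cycles: if $B$ is an induced cycle whose edges have lists of size $\ge 3$ and two prescribed consecutive edges each carry one forbidden colour, then $B$ has a proper facial edge-colouring from the lists in which exactly one edge receives a colour larger than all the others. I would prove this by selecting the ``capital'' edge to be one whose list contains a large enough colour, colouring it first, deleting from the remaining lists every colour that is $\ge$ that value, and list-colouring the resulting path greedily; the fiddly point is the case where only the two constrained edges have large colours in their lists, where one has to check that $3-1=2$ still leaves room. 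This part I expect to be routine.

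The real obstacle is what replaces the invariant ``every outer edge is coloured from $\{1,2,3\}$ while interior edges may use $4$''. In the non-list proof this is precisely what guarantees that when a formerly-outer edge becomes interior again after an insertion, the freshly inserted edge (colour $4$) is the \emph{unique} maximum of every face it newly bounds --- including the restored outer face --- and conflicts with nothing. With arbitrary integer lists there is no universal ``small'' palette: an outer edge might have list $\{100,200,300,400\}$ while the very next edge to be inserted has list $\{1,2,3,4\}$, so no per-edge normalisation of the boundary colouring can be maintained, and the clean invariant has no literal analogue. I expect the heart of any correct proof to be the discovery of the right substitute --- perhaps carrying along, for each inner face created so far, a designated future-capital edge together with a threshold that every other boundary edge of that face must respect, and showing that the threshold can always be honoured when a face is closed off; or reorganising the peeling so that the ``active'' boundary is always a single cycle, on which a direct argument like the one in the previous paragraph applies. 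Designing such an invariant so that it is simultaneously strong enough to push through (A), (B) and the leaf-block step and weak enough to be preserved under all of them is, I believe, where the difficulty of this conjecture really lies.
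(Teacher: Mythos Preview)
The statement you are addressing is not a theorem in the paper but an open conjecture, stated without proof in the concluding remarks. There is therefore no ``paper's own proof'' to compare your proposal against: the authors explicitly leave this as a problem they believe to be true but have not established.

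Your proposal is accordingly not a proof either, and you are candid about this. You correctly identify that the structural scaffolding of Lemma~\ref{lem:maine} --- good vertices, free pairs, the block analysis, configurations (A) and (B), the leaf-cycle reduction --- is colour-agnostic and would survive a list formulation. You then put your finger on exactly the obstruction: the invariant ``outer edges get colours in $\{1,2,3\}$, interior edges may use $4$'' is what makes the reinsertion step work, because colouring the reinserted edge with $4$ simultaneously avoids all conflicts \emph{and} guarantees it is the unique maximum on every face it closes. With arbitrary lists there is no global ``top colour'' to reserve, and your example of lists $\{100,200,300,400\}$ versus $\{1,2,3,4\}$ shows cleanly why no naive normalisation works. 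This diagnosis is sound, and it is presumably why the paper states the list version as a conjecture rather than a corollary.

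In short: there is no gap to name in your reasoning, because you are not claiming a proof; you are outlining an approach and correctly locating where it breaks. The paper is in the same position --- it offers no argument for this conjecture at all.
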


\paragraph{Acknowledgment.}

The project has been supported by the bilateral cooperation between USA and Slovenia, project no. BI--US/17--18--013.
V. Andova, B. Lu\v{z}ar, and R. \v{S}krekovski were partially supported by the Slovenian Research Agency Program P1--0383.
B. Lidick\'y was partially supported by NSF grant DMS-1600390.

\end{document}